\newtheorem{thm}{Theorem}[section]
\newtheorem{cor}[thm]{Corollary}
\newtheorem{lem}[thm]{Lemma}
\newtheorem{prop}[thm]{Proposition}
\newtheorem{conjecture}[thm]{Conjecture}
\theoremstyle{definition}
\newtheorem{defn}[thm]{Definition}
\theoremstyle{remark}
\newtheorem{rem}[thm]{Remark}
\numberwithin{equation}{section}
\newcommand{\B}{\textbf{B}}
\newcommand{\mfu}{\mathfrak{U}}
\DeclareMathOperator{\diam}{diam}
\newcommand{\abs}[1]{\left\vert#1\right\vert}
\newcommand{\norm}[1]{\operatorname{Sp}(#1)}
\newcommand{\Sp}[1]{\operatorname{Sp}(#1)}
\begin{document}

\title[On spaces extremal for the Gomory-Hu inequality]
 {On spaces extremal for the Gomory-Hu inequality}

\author{O. Dovgoshey}
\address{Institute of Applied Mathematics and Mechanics NASU, R. Luxemburg str. 74, Donetsk 83114, Ukraine
}
\address{Mersin University Faculty of Sciences and Arts, Department of Mathematics, Mersin 33342, Turkey
}
\email{aleksdov@mail.ru}

\author{E. Petrov}

\address{Institute of Applied Mathematics and Mechanics NASU, R. Luxemburg str. 74, Donetsk 83114, Ukraine}

\email{eugeniy.petrov@gmail.com}

\author{H.-M. Teichert}

\address{Institute of Mathematics, University of L\"ubeck, Ratzeburger Allee 160, 23562 L\"ubeck, Germany}

\email{teichert@math.uni-luebeck.de}



\subjclass[2010]{Primary 54E35, 37E25;}

\keywords{finite ultrametric space, weak similarity, weighted graph, binary tree, ball-preserving mapping, $\varepsilon$-isometry}





\begin{abstract}
Let $(X,d)$ be a finite ultrametric space. In 1961 E.C. Gomory and T.C. Hu proved
the inequality $\abs{\Sp{X}}\leqslant\abs{X}$ where $\Sp{X}=\{d(x,y)\colon x,y \in  X\}$. Using weighted Hamiltonian cycles and weighted Hamiltonian paths we give new necessary and sufficient conditions under which the Gomory-Hu inequality becomes an equality.
We find the number of non-isometric $(X,d)$ satisfying the equality $|\norm{X}|=|X|$ for given $\norm{X}$. Moreover it is shown that every finite semimetric space $Z$ is an image under a composition of mappings $f\colon X\to Y$ and $g\colon Y\to Z$ such that $X$ and $Y$ are finite ultrametric space, $X$ satisfies the above equality, $f$ is an $\varepsilon$-isometry with an arbitrary $\varepsilon>0$, and $g$ is a ball-preserving map.

%
\end{abstract}

\maketitle
\section{Introduction}
Recall some necessary definitions from the theory of metric spaces.
An \textit{ultrametric} on a set $X$ is a function $d\colon X\times X\rightarrow \mathbb{R}^+$, $\mathbb R^+ = [0,\infty)$, such that for all $x,y,z \in X$:
\begin{itemize}
\item [(i)] $d(x,y)=d(y,x)$,
\item [(ii)] $(d(x,y)=0)\Leftrightarrow (x=y)$,
\item [(iii)] $d(x,y)\leq \max \{d(x,z),d(z,y)\}$.
\end{itemize}
Inequality (iii)  is often called the {\it strong triangle inequality}.
By studying the flows in networks, R. Gomory and T. Hu \cite{GomoryHu(1961)}, deduced an inequality that can be formulated, in the language of ultrametric spaces, as follows: if $(X,d)$ is a finite nonempty ultrametric
space with the \emph{spectrum}
\begin{equation*}
\Sp{X}=\{d(x,y)\colon x,y \in  X\},
\end{equation*}
then
\begin{equation*}
\abs{\Sp{X}}\leqslant\abs{X}.
\end{equation*}
\begin{defn}
Define by  $\mfu$ the class of finite ultrametric spaces $X$ with $\abs{\Sp{X}} = \abs{X}$.
\end{defn}


Two descriptions of $X \in \mfu$ were obtained in terms of the representing trees an, respectively, so-called diametrical graphs of $X$ (see~\cite{PD(UMB)} theorems 2.3 and 3.1.).
Our paper is also a contribution to this lines of studies. We give a new criterium of $X \in \mfu$ in terms of weighted Hamiltonian cycles and weighted Hamiltonian paths
(see Theorem~\ref{th1}) and find the number of non-isometric $X \in \mathfrak U$ with given $\Sp{X}$ (see Proposition \ref{p2}). It is also shown that every finite semimetric $X$ is an image of a space $Y \in \mathfrak U$, $X = g(f(Y))$, where $g$ is a ball-preserving map and $f$ is an $\varepsilon$-isometry (see Theorem \ref{th4.2} and Theorem \ref{th4.4}).

Recall that a \textit{graph} is a pair $(V,E)$ consisting of nonempty set $V$ and (probably empty) set $E$  elements of which are unordered pairs of different points from $V$. For the graph $G=(V,E)$, the set $V=V(G)$ and $E=E(G)$ are called \textit{the set of vertices} and \textit{the set of edges}, respectively. A graph $G$ is empty if $E(G) = \varnothing$. A graph is complete if $\{x,y\} \in E(G)$ for all distinct $x, y \in V(G)$. Recall that a \emph{path} is a nonempty graph $P=(V,E)$ of the form
$$
V=\{x_0,x_1,...,x_k\}, \quad E=\{\{x_0,x_1\},...,\{x_{k-1},x_k\}\},
$$
where $x_i$ are all distinct. The number of edges of a path is the length. Note that the length of a path can be zero.
A Hamiltonian path is a path in the graph that visits each vertex exactly once. A finite graph $C$ is a \textit{cycle} if $|V(C)|\geq 3$ and there exists an enumeration $(v_1,v_2,...,v_n)$
 of its vertices such that
\begin{equation*}
(\{v_i,v_j\}\in E(C))\Leftrightarrow (|i-j|=1\quad \mbox{or}\quad |i-j|=n-1).
\end{equation*}
For the graph $G=(V,E)$ a \emph{Hamiltonian cycle} is a cycle which is a subgraph of $G$ that visits every vertex exactly once. A connected graph without cycles is called a tree.
A tree $T$ may have a distinguished vertex called the \emph{root}; in this case $T$ is called a \emph{rooted tree}.

Generally we  follow terminology used in~\cite{BM}. A graph $G=(V,E)$ together with a function $w\colon E\rightarrow \mathbb{R}^+$, where $\mathbb{R}^+=[0,+\infty)$, is called a \textit{weighted} graph, and  $w$ is called a \textit{weight} or a \textit{weighting function}. The weighted graphs we denote by $(G,w)$.  

A nonempty graph $G$ is called \emph{complete $k$-partite} if its vertices can be divided into $k$ disjoint nonempty subsets $X_1,...,X_k$ so that there are no edges joining the vertices of the same subset $X_i$ and any two vertices from different $X_i,X_j$, $1\leqslant i,j \leqslant k$ are adjacent. In this case we write $G=G[X_1,...,X_k]$.

\section{Cycles in ultrametric spaces}\label{cr}
In the following we identify a finite ultrametric space $(X,d)$ with a complete weighted graph $(G_X,w_d)$ such that $V(G_X)=X$ and
\begin{equation}\label{eq2}
\forall \,  x,y\in X, \, x\neq y\colon \quad	 w_d(\{x,y\})=d(x,y).
\end{equation}
The following lemma was proved in~\cite{DP(MatSb)}.
\begin{lem}\label{lem2.2}
Let $(X,d)$ be an ultrametric space with $\abs{X}\geqslant 3$. Then for every cycle $C\subseteq G_X$ there exist at least two distinct edges
$e_1, e_2 \in C$ such that
\begin{equation}\label{eq3}
w_d(e_1)=w_d(e_2)=\max\limits_{e\in E(C)}w_d(e).
\end{equation}
\end{lem}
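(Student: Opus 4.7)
The plan is to prove the lemma by contradiction: assume that the maximum weight $M=\max_{e\in E(C)}w_d(e)$ is attained on exactly one edge of the cycle, and derive a violation of the strong triangle inequality.

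First I would fix an enumeration $(v_1,\dots,v_n)$ of the vertices of $C$ realizing the cyclic structure, and suppose for contradiction that there is a unique edge of maximum weight. Without loss of generality, call it $e_1=\{v_1,v_n\}$ with $d(v_1,v_n)=M$, so that every other edge $\{v_i,v_{i+1}\}$, $1\leqslant i\leqslant n-1$, satisfies $w_d(\{v_i,v_{i+1}\})=d(v_i,v_{i+1})<M$. The remaining edges form a Hamiltonian path in $C$ from $v_1$ to $v_n$ whose edge weights are all strictly less than $M$.

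Next I would apply the strong triangle inequality iteratively along this complementary path. A straightforward induction on $k$, starting from the inequality $d(v_1,v_3)\leqslant\max\{d(v_1,v_2),d(v_2,v_3)\}$ and using at each step
\begin{equation*}
d(v_1,v_{k+1})\leqslant\max\{d(v_1,v_k),d(v_k,v_{k+1})\},
\end{equation*}
yields
\begin{equation*}
d(v_1,v_n)\leqslant\max_{1\leqslant i\leqslant n-1}d(v_i,v_{i+1})<M,
\end{equation*}
contradicting $d(v_1,v_n)=M$. Hence the maximum must be attained on at least two distinct edges of $C$.

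I do not anticipate a serious obstacle: the only delicate point is being precise about the inductive step so that the strong triangle inequality is really being applied to a chain lying in the ultrametric space (not a formal combinatorial chain). The assumption $|X|\geqslant 3$ is used to guarantee that a cycle exists at all, and no additional case analysis is needed since the argument is symmetric in the chosen maximum edge.
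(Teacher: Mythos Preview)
Your proof is correct: assuming a unique maximum-weight edge and applying the strong triangle inequality along the complementary path immediately gives the contradiction. Note that the paper does not actually prove this lemma --- it is quoted from \cite{DP(MatSb)} --- but your argument is the standard one and would be the expected proof.
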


We shall say that a weighted cycle $(C, w)$ is \emph{characteristic} if the following conditions hold.
\begin{itemize}
\item[(i)] There are exactly two distinct $e_1, e_2 \in E(C)$ such that \eqref{eq3} holds.
\item[(ii)] The restriction of $w$ on the set $E(C)\setminus \{e_1,e_2\}$ is strictly positive and injective.
\end{itemize}

\begin{rem}\label{r2.2} Let us explain the choice of a name for such a type of cycles.
It was proved in~\cite{DP(MatSb)} that for every characteristic weighted cycle $(C,w)$
there is a unique ultrametric $d\colon V(C)\times V(C) \rightarrow \mathbb{R}^+$ such that
$$
d(x,y) = w(\{x,y\})
$$
for all  $\{x,y\} \in E(C)$.
In other words we can uniquely reconstruct whole the ultrametric space $(X,d)$ by characteristic cycle $(C,w_d)\subseteq (G_X,w_d)$ if $\abs{V(C)}=\abs{X}$.
\end{rem}

We need the following definition.

\begin{defn}[\cite{GomoryHu(1961)}]\label{d2}
Let $(X,d)$ be a finite ultrametric space. Define the graph $G_X^d$ as follows $V(G_X^d)=X$ and
$$
(\{u,v\}\in E(G_X^d))\Leftrightarrow(d(u,v)=\diam X).
$$
We call $G_X^d$ a \emph{diametrical graph} of the space $(X,d)$.
\end{defn}

\begin{lem}[\cite{GomoryHu(1961)}]\label{cor1}
Let $(X,d)$ be a finite ultrametric space, $|X|\geqslant 2$. If $(X,d)\in \mathfrak U$, then $G_X^d$ is a bipartite graph, $G_X^d=G_X^d[X_1,X_2]$ and $X_1 \in \mathfrak U$, $X_2 \in \mathfrak U$.
\end{lem}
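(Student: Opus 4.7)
My plan is to extract the multipartite structure of $G_X^d$ directly from the strong triangle inequality and then use a counting argument on spectra to collapse the number of parts to two. Set $D = \diam X$ and introduce the relation $x \sim y \Longleftrightarrow d(x,y) < D$ on $X$. The strong triangle inequality gives $d(x,z) \leqslant \max\{d(x,y), d(y,z)\} < D$ whenever $x \sim y$ and $y \sim z$, so $\sim$ is an equivalence relation. Let $X_1, \dots, X_k$ be its equivalence classes. Any two points in different classes are at distance exactly $D$ (they are at distance $\leqslant D$ and not $< D$), while distances within a class are $< D$. Hence $G_X^d$ is the complete $k$-partite graph $G_X^d[X_1, \dots, X_k]$, and $k \geqslant 2$ since $\abs{X} \geqslant 2$ and $D > 0$.

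Next I would prove $k = 2$ by a direct count of distances. Because $\Sp{X_i} \subseteq [0, D)$ for each $i$ and $D \in \Sp{X}$, the spectrum decomposes as
$$
\Sp{X} = \{D\} \cup \bigcup_{i=1}^{k} \Sp{X_i},
$$
with $D$ not appearing in any $\Sp{X_i}$. The value $0$ lies in every $\Sp{X_i}$, so
$$
\Bigl|\bigcup_{i=1}^{k} \Sp{X_i}\Bigr| \leqslant \sum_{i=1}^{k} \abs{\Sp{X_i}} - (k-1).
$$
Applying the Gomory-Hu inequality $\abs{\Sp{X_i}} \leqslant \abs{X_i}$ to each class yields
$$
\abs{\Sp{X}} \leqslant 1 + \sum_{i=1}^{k} \abs{X_i} - (k-1) = \abs{X} - (k-2).
$$
The hypothesis $\abs{\Sp{X}} = \abs{X}$ then forces $k \leqslant 2$, hence $k = 2$ and $G_X^d = G_X^d[X_1,X_2]$ is bipartite.

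To conclude, note that equality throughout the displayed chain of inequalities requires in particular $\abs{\Sp{X_i}} = \abs{X_i}$ for $i = 1, 2$, which is precisely the condition $X_1, X_2 \in \mathfrak{U}$. The one delicate step is the bookkeeping of the spectra: the crucial $-(k-1)$ correction arises from counting the common zero exactly once in the union, and it is this term that drives the collapse to $k = 2$. Everything else is either the strong triangle inequality or an application of the Gomory-Hu inequality already at our disposal.
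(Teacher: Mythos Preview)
Your argument is correct. Note, however, that the paper does not supply its own proof of this lemma: it is quoted as a known result from the cited reference, so there is no in-paper proof to compare against directly. That said, the counting device you employ---decomposing $\Sp{X}$ as $\{D\}\cup\bigcup_i\Sp{X_i}$, subtracting the overcount of the shared zero, and applying the Gomory--Hu inequality to each part---is exactly the mechanism the paper invokes later in the proof of Theorem~\ref{th1} (see \eqref{e23} and \eqref{e24}) to show that $\Sp{X_1}\cap\Sp{X_2}=\{0\}$. So your approach is fully in line with the paper's own methods.
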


We shall say that a weighted path $(P,w)$ is \emph{characteristic} if the weighting function $w\colon E(P)\rightarrow \mathbb R^+$ is injective and strictly positive.

The next theorem is the main result of this section.
\begin{thm}\label{th1}
Let $(X,d)$ be a finite ultrametric space with $\abs{X}\geqslant 3$. Then the following conditions are equivalent.
\begin{itemize}
  \item[(i)] $(X,d)\in \mfu$.
  \item[(ii)] There exists a characteristic Hamiltonian path in $G_X$.
  \item[(iii)] There exists a characteristic Hamiltonian cycle in $G_X$.
\end{itemize}
\end{thm}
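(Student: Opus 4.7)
The plan is to establish the cyclic chain of implications $\text{(ii)}\Rightarrow\text{(iii)}\Rightarrow\text{(i)}\Rightarrow\text{(ii)}$. The first two implications are short; the last one proceeds by induction on $\abs{X}$ using Lemma~\ref{cor1}.

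For $\text{(ii)}\Rightarrow\text{(iii)}$, I would take a characteristic Hamiltonian path $P = x_0 x_1 \cdots x_{n-1}$ and close it into a cycle $C$ by adjoining the edge $\{x_0,x_{n-1}\}$. Iterated strong triangle inequality gives $d(x_0,x_{n-1})\le w_{\max}$, where $w_{\max}$ denotes the largest weight along $P$; since the weights on $P$ are pairwise distinct, $w_{\max}$ is attained on $P$ exactly once, and Lemma~\ref{lem2.2} applied to $C$ forces the closing edge to carry weight $w_{\max}$ as well. The cycle $C$ then has exactly two edges of maximal weight and $n-2$ remaining edges with distinct positive weights, and is therefore characteristic. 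For $\text{(iii)}\Rightarrow\text{(i)}$ the argument is one line: a characteristic Hamiltonian cycle already exhibits $n-1$ distinct positive values in $\Sp{X}$, so $\abs{\Sp{X}}\ge n$, and the Gomory-Hu inequality supplies the reverse bound.

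The substantive work is $\text{(i)}\Rightarrow\text{(ii)}$, which I would prove by induction on $n = \abs{X}$. The base $n = 3$ follows from Lemma~\ref{lem2.2} together with $\abs{\Sp{X}} = 3$: the triangle $G_X$ must have exactly two edges of maximum weight and one edge of a smaller positive weight, so deletion of a maximum edge yields a characteristic Hamiltonian path. For the inductive step $n \ge 4$, Lemma~\ref{cor1} supplies a bipartition $X = X_1 \cup X_2$ with $X_1,X_2 \in \mfu$ and every cross-distance equal to $\diam X$; because $G_X^d$ is the complete bipartite graph between $X_1$ and $X_2$, no intra-part distance can equal $\diam X$, so $\diam X \notin \Sp{X_i}$. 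A short inclusion-exclusion count applied to $\Sp{X} = \Sp{X_1} \cup \Sp{X_2} \cup \{\diam X\}$, combined with $\abs{\Sp{X_i}} = \abs{X_i}$ and $\abs{\Sp{X}} = \abs{X_1} + \abs{X_2}$, yields the crucial disjointness $\Sp{X_1} \cap \Sp{X_2} = \{0\}$. Invoking the inductive hypothesis (trivially verified when $\abs{X_i}\le 2$) to obtain characteristic Hamiltonian paths $P_1, P_2$ in $X_1, X_2$ and joining them through a single bridge edge of weight $\diam X$ between endpoints produces a Hamiltonian path in $G_X$ whose weighting is strictly positive and injective, precisely because of this disjointness and the novelty of $\diam X$.

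The step I expect to be the main obstacle is the spectrum-disjointness deduction $\Sp{X_1} \cap \Sp{X_2} = \{0\}$: it is the combinatorial input that makes the concatenate-plus-bridge construction produce an injective weighting, and it is not apparent from the bipartition alone but emerges only from the tight equality $\abs{\Sp{X}} = \abs{X}$. Once it is established, the rest of the induction is routine bookkeeping about which values appear as weights on which part of the combined path.
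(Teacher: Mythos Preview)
Your proposal is correct and follows essentially the same route as the paper: the same cyclic chain of implications, the same use of Lemma~\ref{lem2.2} for $\text{(ii)}\Rightarrow\text{(iii)}$ and $\text{(iii)}\Rightarrow\text{(i)}$, and the same induction via Lemma~\ref{cor1} for $\text{(i)}\Rightarrow\text{(ii)}$, including the key spectrum-disjointness $\Sp{X_1}\cap\Sp{X_2}=\{0\}$ derived from the equality $\abs{\Sp{X}}=\abs{X}$. Your treatment is slightly more explicit in two places (you justify the base case $n=3$ via Lemma~\ref{lem2.2} rather than declaring it evident, and you note that the inductive hypothesis is trivially available when $\abs{X_i}\leqslant 2$), but these are cosmetic differences.
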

\begin{proof}
\textbf{(i)$\Rightarrow$(ii)}. 
We shall prove the implication (i)$\Rightarrow$(ii) by induction on $|X|$. Let $(X,d) \in \mathfrak U$. If $|X|=3$, then the existence of a characteristic Hamiltonian path is evident. Suppose the implication (i)$\Rightarrow$(ii) holds for $X$ with $|X|\leqslant n-1$. Let $|X|=n$. Let us prove that there exists a characteristic Hamiltonian path in $G_X$.  According to Lemma~\ref{cor1} we have
\begin{equation}\label{e2.6}
G_X^d=G^d_X[X_1,X_2],\quad |X_1|\leqslant n-1,\quad |X_2|\leqslant n-1
\end{equation}
and $X_1 \in \mathfrak U$, $X_2 \in \mathfrak U$. By the induction supposition there exist characteristic Hamiltonian paths $P_1\subseteq G_{X_1}$ and $P_2\subseteq G_{X_2}$. Let $V(P_1)=\{x_1,...,x_m\}$ and $V(P_2)=\{x_{m+1},...,x_n\}$, $1\leqslant m \leqslant n-1$. Since $G_X^d=G_X^d[X_1,X_2]$, we have
\begin{equation*}
\diam X \notin \Sp{X_1} \ \, \text{ and } \ \, \diam X \notin \Sp{X_2}.
\end{equation*}
Moreover, the equality
\begin{equation}\label{e23}
\Sp{X_1}\cap \Sp{X_2}=\{0\}
\end{equation}
holds. Indeed, it is clear that
$$
0 \in \Sp{X_1}\cap \Sp{X_2},
$$
but if $|\Sp{X_1}\cap \Sp{X_2}| \geqslant 2$, then using the equality
\begin{equation}\label{e24}
\Sp{X} = \Sp{X_1}\cup \Sp{X_2} \cup \{\diam X\}
\end{equation}
and the Gomory--Hu inequality we obtain
$$
|\Sp{X}| \leqslant 1 + |X_1| + |X_2| - |X_1\cap X_2| < |X_1| + |X_2| = |X|
$$
contrary to $(X,d)\in \mathfrak U$. The equality $d(x_m,x_{m+1})=\diam X$, \eqref{e23} and \eqref{e24} imply that the path $P$ with $V(P) = \{x_1,..,x_m,x_{m+1},...,x_n\}$ is a characteristic Hamiltonian path in $G_X$.

\textbf{(ii)$\Rightarrow$(iii)}. Let $P$ be a characteristic Hamiltonian path in $G_X$ with $V(P)=\{x_1,...,x_n\}$. Consider the cycle $C=(x_1,...,x_n)$. It is clear that $C$ is Hamiltonian. According to Lemma~\ref{lem2.2} the equality
$$
w_d(\{x_1,x_n\})=\max\limits_{e\in E(P)}w_d(e)
$$
holds. This means that $C$ is characteristic.

\textbf{(iii)$\Rightarrow$(i)}. Let $(X,d)$ be a finite ultrametric space and let $C$ be a characteristic Hamiltonian cycle in $G_X$. Using Lemma~\ref{lem2.2} with this $C$ we easily show that $|\Sp{X}|=|X|$. Condition (i) follows.

\end{proof}

With every finite ultrametric space $(X, d)$, we can associate (see~\cite{PD(UMB)}) a labeled rooted $m$-ary tree $T_X$ by the following rule. If $X=\{x\}$ is a one-point set, then $T_X$ is a tree consisting of one node $x$
considered strictly binary by definition. Let $|X|\geqslant 2$ and $G^d_X = G^d_X[X_1,...,X_k]$ be the diametrical graph of the space $(X, d)$. In this case the root of the tree $T_X$ is labeled by $\diam X$ and, moreover, $T_X$ has $k$ nodes $X_1,...,X_k$ of the first level with the labels

\begin{equation}\label{e2.7}
l_i=
\begin{cases}
\diam X_i, &\text{if } \, |X_i|\geqslant 2,\\
x, &\text{if } \, X_i\, \text{is a one-point set}\\
&\text{with the single element } \, x,
\end{cases}
\end{equation}
$i = 1,...,k$. The nodes of the first level indicated by labels $x \in X$ are leaves, and those indicated by labels $\diam X_i$ are internal nodes of the tree $T_X$. If the first level has no internal nodes, then the tree $T_X$ is constructed. Otherwise, by repeating the above-described procedure with $X_i \subset X$ corresponding to internal nodes of the first level, we obtain the nodes of the second level, etc. Since $|X|$ is finite, and the cardinal numbers $|Y|$, $Y \subseteq X$, decrease strictly at the motion along any path starting from the root, consequently all vertices on some level will be leaves, and the construction of $T_X$ is completed. The above-constructed labeled tree $T_X$ is called the \emph{representing tree} of the space $(X, d)$. We note that every element $x \in X$ is ascribed to some leaf, and all internal nodes are labeled as $r \in \Sp{X}$. In this case, different leaves correspond to different $x \in X$, but different internal nodes can have coinciding labels.

Recall that a rooted tree is \emph{strictly binary} if every internal node has exactly two children.
Note that the correspondence between trees and ultrametric spaces is well known~\cite{GNS00,GurVyal(2012),H04}.

Define by $L_T$ the set of leaves of the tree $T$ and by $l(v)$ the label of the vertex $v$.

The proof of the following two lemmas is immediate.

\begin{lem}\label{lem1}
Let $X$ be a finite ultrametric space having a strictly binary tree $T_X$. If $v_0$ and $v_1$ are interval nodes of $T_X$ and $v_1$ is a direct successor of $v_0$ then the inequality $l(v_1)< l(v_0)$ holds.
\end{lem}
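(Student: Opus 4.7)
The plan is to unwind the definition of the representing tree: each internal node is associated with a subset of $X$ of cardinality at least two, and its label is the diameter of that subset. So I would begin by letting $Y_{0}\subseteq X$ be the subset attached to $v_{0}$, so that $l(v_{0})=\diam Y_{0}$, and applying the recursive construction of $T_{X}$ to $Y_{0}$ to obtain its diametrical decomposition $G^{d}_{Y_{0}}=G^{d}_{Y_{0}}[Y_{1},\ldots ,Y_{k}]$. The direct successor $v_{1}$ of $v_{0}$ then corresponds to some part $Y_{i}$; since $v_{1}$ is internal we have $|Y_{i}|\geqslant 2$, and by \eqref{e2.7} the label reads $l(v_{1})=\diam Y_{i}$.

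The key step is to convert the strict inequality $l(v_{1})<l(v_{0})$ into a statement about $\diam Y_{i}$ versus $\diam Y_{0}$. By Definition~\ref{d2}, an edge $\{u,v\}$ belongs to $E(G^{d}_{Y_{0}})$ if and only if $d(u,v)=\diam Y_{0}$. Because $G^{d}_{Y_{0}}$ is complete $k$-partite with parts $Y_{1},\ldots,Y_{k}$, no two distinct vertices of the same part $Y_{i}$ are adjacent, hence $d(u,v)<\diam Y_{0}$ for all distinct $u,v\in Y_{i}$. Taking the maximum over such pairs yields $\diam Y_{i}<\diam Y_{0}$, which is exactly $l(v_{1})<l(v_{0})$.

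I do not expect any real obstacle: the argument uses only the defining property of the diametrical graph and the recursive construction of $T_{X}$. Note in particular that the hypothesis of strict binariness is not used in this step; the same reasoning works whenever the construction of $T_{X}$ is based on multipartite decompositions of diametrical graphs, which is why the authors call this lemma immediate.
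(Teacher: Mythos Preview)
Your argument is correct and is exactly the ``immediate'' justification the paper has in mind: the paper gives no proof beyond that remark, and your unwinding of the diametrical decomposition together with the definition of $G^{d}_{Y_{0}}$ is precisely what is needed. Your closing observation that strict binariness is irrelevant here is also accurate.
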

\begin{lem}\label{lem2}
Let $(X,d)$ be a finite ultrametric space with $|X|\geqslant 3$ and let $G_X^d=G_X^d[X_1, \ldots, X_k]$ be the diametrical graph of $(X,d)$. Then a tree $T_X$ is strictly binary if and only if $k=2$ and $T_{X_1}$ and $T_{X_2}$ are strictly binary.
\end{lem}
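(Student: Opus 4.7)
The plan is to unfold the recursive construction of the representing tree $T_X$ given just before Lemma~\ref{lem1} and read off both implications from what happens at the root.

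The key structural observation is the following. By the construction, the root of $T_X$ is labeled by $\diam X$ and has exactly $k$ children, one for each block $X_i$ of the partition determined by $G_X^d = G_X^d[X_1,\ldots,X_k]$. Moreover, the subtree of $T_X$ rooted at the $i$-th child coincides with $T_{X_i}$: if $|X_i|\geqslant 2$ this is the instance of the construction applied to $(X_i, d|_{X_i\times X_i})$, and if $|X_i|=1$ this subtree is a single leaf labeled by the unique element of $X_i$, which is strictly binary by the convention stated at the beginning of the construction. Consequently, the set of internal nodes of $T_X$ is the disjoint union of $\{\text{root}\}$ with the sets of internal nodes of the $T_{X_i}$, and the child counts in $T_X$ match the child counts in the respective $T_{X_i}$ except at the root, where the root of $T_X$ has $k$ children.

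From this observation both directions are immediate. For (\emph{strictly binary} $\Rightarrow$ $k=2$ and each $T_{X_i}$ strictly binary): the root of $T_X$ is internal, so it must have exactly two children, forcing $k=2$; any internal node of $T_{X_1}$ or $T_{X_2}$ is also an internal node of $T_X$, hence has exactly two children, so $T_{X_1}$ and $T_{X_2}$ are strictly binary. For the converse, if $k=2$ then the root of $T_X$ has exactly two children, and every other internal node of $T_X$ belongs to $T_{X_1}$ or $T_{X_2}$ and therefore has exactly two children by assumption; thus $T_X$ is strictly binary.

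There is no genuine obstacle here; the only point requiring a moment of care is the degenerate case $|X_i|=1$, which must be covered by the convention that a single-node tree is strictly binary. This is exactly why the authors describe the proof as immediate, and the two-line argument above suffices.
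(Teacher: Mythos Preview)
Your argument is correct and is precisely the immediate unfolding of the recursive construction of $T_X$ that the paper has in mind; the paper does not spell out a proof at all, declaring the result immediate, and your two-direction check at the root together with the one-point convention is exactly what that amounts to.
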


\begin{prop}\label{p1}
Let $(X,d)$ be a finite ultrametric space with $|X|\geqslant 3$. The following conditions are equivalent.
\begin{itemize}
\item [(i)] $T_X$ is strictly binary.
\item [(ii)] If $X_1\subseteq X$ and $|X_1|\geqslant 3$, then there exists a Hamiltonian cycle $C\subseteq G_{X_1}$
with exactly two edges of maximal weight.
\item [(iii)] There is no equilateral triangle in $(X,d)$.
\end{itemize}
\end{prop}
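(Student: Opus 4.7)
The plan is to establish (i)$\Leftrightarrow$(iii) first---which automatically makes (i), like (iii), hereditary under passage to subsets---and then close the loop via (i)$\Rightarrow$(ii)$\Rightarrow$(iii). The implication (ii)$\Rightarrow$(iii) is immediate: if $\{x,y,z\}\subseteq X$ were an equilateral triangle, then for $X_1=\{x,y,z\}$ the only Hamiltonian cycle of $G_{X_1}$ is the triangle itself, and it carries three edges of maximal weight, contradicting~(ii).

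For (iii)$\Rightarrow$(i), I would induct on $|X|$ using Lemma~\ref{lem2}. Writing $G_X^d=G_X^d[X_1,\ldots,X_k]$, if $k\geqslant 3$ then selecting one point from each of $X_1,X_2,X_3$ yields three points pairwise at distance $\diam X$, contradicting (iii). Hence $k=2$; since (iii) clearly restricts to $X_1$ and $X_2$, the induction hypothesis gives that $T_{X_1}$ and $T_{X_2}$ are strictly binary, and Lemma~\ref{lem2} concludes. For the reverse (i)$\Rightarrow$(iii), suppose toward a contradiction that $x,y,z$ form an equilateral triangle of common distance~$r$. Recall that the label at the least common ancestor in $T_X$ of two leaves equals the distance between them. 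Let $v$ be the LCA of $\{x,y,z\}$. If two of them, say $x$ and $y$, lay in the same child-subtree of $v$, then $\mathrm{LCA}(x,y)$ would be a proper descendant of $v$ carrying the same label $r=l(v)$, contradicting the strict decrease of labels guaranteed by Lemma~\ref{lem1}. Hence $x,y,z$ occupy three distinct child-subtrees of $v$, so $v$ has at least three children, contradicting (i).

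Finally, for (i)$\Rightarrow$(ii), take $X_1\subseteq X$ with $|X_1|=n\geqslant 3$. By the just-established equivalence, the hereditary property of (iii) implies that $T_{X_1}$ is strictly binary, so Lemma~\ref{lem2} yields the bipartite decomposition $G^d_{X_1}=G^d_{X_1}[Y_1,Y_2]$. I pick arbitrary Hamiltonian paths $(y_1,\ldots,y_m)$ in the complete graph $G_{Y_1}$ and $(y_{m+1},\ldots,y_n)$ in $G_{Y_2}$---these exist trivially and reduce to a single vertex if $|Y_i|=1$---and close them into the Hamiltonian cycle $y_1 y_2\cdots y_n y_1$ of $G_{X_1}$. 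The two connecting edges $\{y_m,y_{m+1}\}$ and $\{y_n,y_1\}$ have weight $\diam X_1$, while every remaining edge lies inside some $Y_i$ and so carries weight at most $\diam Y_i<\diam X_1$; therefore the cycle has exactly two edges of maximal weight.

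The main technical step is the LCA argument inside (i)$\Rightarrow$(iii); the key observation is that an equilateral triple forces the three pairwise LCAs to coincide with the LCA of the whole triple, after which Lemma~\ref{lem1} shuts the case in one line. Everything else is bookkeeping with the diametrical decomposition and the trivial fact that complete graphs always possess Hamiltonian paths.
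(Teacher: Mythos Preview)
Your proof is correct and close in spirit to the paper's, but the organization differs in one place worth noting. The paper proves the cycle (i)$\Rightarrow$(ii)$\Rightarrow$(iii)$\Rightarrow$(i); in particular, for (i)$\Rightarrow$(ii) it stays inside $T_X$, takes the least common ancestor $v_0$ of the leaves labeled by $X_1$, and splits $X_1$ according to the two children of $v_0$, then concatenates arbitrary Hamiltonian paths of the two parts into the desired cycle (with Lemma~\ref{lem1} giving the strict inequality on the non-connecting edges). You instead first prove (i)$\Rightarrow$(iii) directly via the LCA/Lemma~\ref{lem1} argument, then use the hereditariness of (iii) to pass to the subspace $(X_1,d)$, apply (iii)$\Rightarrow$(i) there, and invoke Lemma~\ref{lem2} to obtain the bipartite diametrical split $G^d_{X_1}=G^d_{X_1}[Y_1,Y_2]$; the cycle you then build coincides with the paper's (your $Y_1,Y_2$ are exactly the paper's $L_1,L_2$). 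The remaining implications (ii)$\Rightarrow$(iii) and (iii)$\Rightarrow$(i) are handled identically in both proofs. Your detour buys a clean statement that condition (i) is hereditary, at the cost of one extra (short) argument; the paper's route is slightly more direct but leaves that hereditariness implicit. One cosmetic remark: in your (i)$\Rightarrow$(iii) paragraph, once you assume (i) the case ``$x,y,z$ occupy three distinct child-subtrees'' is already impossible, so the dichotomy could be trimmed---but what you wrote is not wrong.
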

\begin{proof}

\textbf{(i)$\Rightarrow$(ii)}. Suppose $T_X$ is strictly binary. Let $X_1$ be a subset of $X$, $|X_1|\geqslant 3$. According to construction of $T_X$ all elements of $X_1$ are labels of leaves of $T_X$. Let $v_0$ be a smallest common predecessor for the leaves of $T_X$ labeled by elements of $X_1$. Let $v_0^1$ and $v_0^2$ be the two offsprings of $v_0$ (direct successors) and let $T_1$ and $T_2$ be the subtrees of the tree $T_X$ with the roots $v_0^1$ and $v_0^2$. Let $L_1=L_{T_1}\cap X_1$ and $L_2=L_{T_2}\cap X_1$ and let $P_1=\{x_1,...,x_m\}$ and  $P_2=\{x_{m+1},...,x_{|X_1|}\}$, $1\leqslant m \leqslant |X_1|-1$, be Hamiltonian paths in the spaces $(L_1,d)$ and $(L_2,d)$. By the property of representing trees of ultrametric spaces we have $d(x,y)=l(v_0)$ for all $x\in L_1$ and $y\in L_2$. Since $X_1 = L_1\cup L_2$, we obtain that the Hamiltonian cycle $C=(x_1,...,x_m,x_{m+1},...,x_{|X_1|})$ has exactly the two edges $\{x_1,x_{|X_1|}\}$ and $\{x_m,x_{m+1}\}$ of maximal weight.

\textbf{(ii)$\Rightarrow$(iii)}. This implication is evident.

\textbf{(iii)$\Rightarrow$(i)}. We will prove (i) by induction on $|X|$. The statement (i) evidently follows from (iii) if $|X|=3$. Assume that (iii)$\Rightarrow$(i) is satisfied for all finite ultrametric spaces $(X, d)$ with $3 \leqslant |X|  \leqslant n$, $n \in \mathbb N$. Let $G_X^d = G_X^d[X_1, \ldots, X_k]$ be the diametrical graph of $(X,d)$. Statement (i) holds if $k=2$. Indeed, since the inequality $|X_i|<|X|$ holds, the induction assumption implies that for every $i = 1, \ldots, k$, $T_{X_i}$ is a strictly binary tree. Hence if $k=2$, then $T_X$ is a strictly binary tree by Lemma~\ref{lem2}. To complete the proof it suffices to note that if $k \geqslant 3$ and $x_i \in X_i$ for $i =1,2,3$, then the points $x_1$, $x_2$, $x_3$ form an equilateral triangle with $d(x_1,x_2)=d(x_2,x_3) = d(x_3,x_1)=\diam X$.
\end{proof}

\section{The number of non-isometric $X \in \mathfrak U$ with given $\Sp{X}$.}\label{numofwe}
Let $n \in \mathbb N$ and $\mfu_{n}$ denote the class of ultrametric spaces $X\in \mfu$ such that $\abs{X}=n$.  In the present section we study the following question: how many non-isometric spaces having the same spectrum are in the class $\mfu_{n}$? Let us denote this number by $\kappa(\mfu_n)$.


\begin{defn}[\! \cite{DP(ActaMH)}]\label{def1.1}
Let  $(X,d_X)$, $(Y,d_Y)$ be metric spaces.  A bijective mapping $\Phi\colon X\to Y$ is a \emph{weak similarity} if there is a strictly increasing bijective function $f\colon \Sp{Y}\to \Sp{X}$ such that the equality
\begin{equation}\label{eq1.4}
d_X(x,y)=f(d_Y(\Phi(x),\Phi(y)))
\end{equation}
holds for all $x,y\in X$. Write $X\simeq Y$ if a weak similarity $\Phi:X \to Y$ exists.
\end{defn}

It is clear that $\simeq$ is an equivalence relation. It was proved in~\cite{DP(ActaMH)} that if $X$ and $Y$ are compact ultrametric spaces with the same spectrum, then every week similarity $\Phi\colon X \to Y$ is an isometry.
So, the main question of this section can be reformulated as follows. How many spaces are there in $\mfu_{n}$ \emph{up to weak similarity}?

\begin{prop}\label{p2}
Let $\mathfrak U_n:=\{X \in \mathfrak U: |X|=n\}$, $n \in \mathbb N$, let $\mathfrak U_n/\simeq$ be the quotient set of $\mathfrak U_n$ by $\simeq$ and let
$$
\kappa({\mathfrak{U}_n}):=\mathop{\mathrm{card}}(\mathfrak U_n/\simeq).
$$
Then the equality
\begin{equation}\label{e3.2}
\kappa({\mathfrak{U}_n}) = \sum_{k=2}^{n-1} C_{n-3}^{k-2} \kappa({\mathfrak{U}_k})\kappa({\mathfrak{U}_{n-k}})
\end{equation}
holds for every integer $n \geqslant 3$ with $\kappa({\mathfrak{U}_1}) = \kappa({\mathfrak{U}_2})=1$ and
$$
C_{n-3}^{k-2} = \frac{(n-3)!}{(k-2)!(n-k-1)!}.
$$
\end{prop}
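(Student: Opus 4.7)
The plan is to fix a model $n$-element spectrum $S=\{0=a_0<a_1<\cdots<a_{n-1}\}$ and count isometry classes of spaces $(X,d)\in\mathfrak{U}_n$ with $\Sp{X}=S$. By the fact recalled immediately before the statement of Proposition~\ref{p2}---weak similarity coincides with isometry on compact ultrametric spaces sharing a common spectrum---this count is exactly $\kappa(\mathfrak{U}_n)$, and the same observation applied at smaller sizes will let me rewrite the analogous counts for the subspaces appearing below as $\kappa(\mathfrak{U}_k)$ and $\kappa(\mathfrak{U}_{n-k})$.

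Given such an $X$, Lemma~\ref{cor1} produces a bipartition $G_X^d=G_X^d[X_1,X_2]$ with $X_1,X_2\in\mathfrak{U}$. The spectrum analysis already carried out in the (i)$\Rightarrow$(ii) step of the proof of Theorem~\ref{th1} yields $\diam X=a_{n-1}\notin\Sp{X_1}\cup\Sp{X_2}$ together with $\Sp{X_1}\cap\Sp{X_2}=\{0\}$, so the $n-2$ values $a_1,\ldots,a_{n-2}$ are distributed disjointly between $\Sp{X_1}$ and $\Sp{X_2}$. Since the bipartition $\{X_1,X_2\}$ is unordered, I would impose the canonical labeling convention that $a_1\in\Sp{X_1}$, which is always satisfiable because $a_1$ lies in exactly one of the two spectra; writing $k:=|X_1|$, the identity $|\Sp{X_i}|=|X_i|$ then forces $2\leq k\leq n-1$. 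With $a_1$ accounted for, the remaining $n-3$ values $a_2,\ldots,a_{n-2}$ split into a $(k-2)$-subset contributing to $\Sp{X_1}$ and an $(n-k-1)$-subset contributing to $\Sp{X_2}$, giving $C_{n-3}^{k-2}$ possibilities; once the spectra of $X_1$ and $X_2$ are fixed, the coincidence of weak similarity and isometry supplies exactly $\kappa(\mathfrak{U}_k)\cdot\kappa(\mathfrak{U}_{n-k})$ choices for the ordered pair $(X_1,X_2)$.

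For the converse I would reconstruct, from any admissible tuple $(k,\text{partition},[X_1],[X_2])$, the space $X:=X_1\sqcup X_2$ equipped with the inherited ultrametrics on each part and $d(x,y):=a_{n-1}$ whenever $x\in X_1$ and $y\in X_2$; the strong triangle inequality is then immediate (every triangle with a cross edge has in fact two cross edges, both of maximal weight $a_{n-1}$), the resulting spectrum equals $S$ by construction, and $|\Sp{X}|=n=|X|$ places $X$ in $\mathfrak{U}_n$. Summing $C_{n-3}^{k-2}\kappa(\mathfrak{U}_k)\kappa(\mathfrak{U}_{n-k})$ over $k\in\{2,\ldots,n-1\}$ then yields~\eqref{e3.2}, and the base values $\kappa(\mathfrak{U}_1)=\kappa(\mathfrak{U}_2)=1$ are immediate. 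The step I expect to be the main obstacle is precisely this labeling convention that turns the unordered bipartition into an ordered pair: without it the naive count would produce $C_{n-2}^{k-1}$ summed over $k\in\{1,\ldots,n-1\}$ with an extra factor $2$ to cancel, whereas assigning $a_1$ to $X_1$ at the outset removes one value from the ``middle'' spectrum and one allocation from the $X_1$ side, converting $C_{n-2}^{k-1}$ into $C_{n-3}^{k-2}$ and restricting $k$ to the range $\{2,\ldots,n-1\}$ without any leftover factor.
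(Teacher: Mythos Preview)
Your proposal is correct and follows essentially the same route as the paper: fix a canonical $n$-element spectrum, use the bipartition $G_X^d=G_X^d[X_1,X_2]$ from Lemma~\ref{cor1}, impose the convention that the smallest positive distance lies in $\Sp{X_1}$ so that $k=|X_1|\in\{2,\ldots,n-1\}$, distribute the remaining $n-3$ nonzero values in $C_{n-3}^{k-2}$ ways, and multiply by $\kappa(\mathfrak{U}_k)\kappa(\mathfrak{U}_{n-k})$. Your treatment is in fact a bit more explicit than the paper's in verifying the converse reconstruction and in explaining why the $a_1\in\Sp{X_1}$ convention converts the naive count into the stated one without a leftover factor of $2$.
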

\begin{proof}
Directly we can find the initial values
$$
\kappa({\mathfrak{U}_1}) = \kappa({\mathfrak{U}_2})=1.
$$
Let $n\geq 3$. The number $\kappa({\mathfrak{U}_n})$ coincides with the number of non-isometric $(X,d)\in \mathfrak{U}_n$ having the spectrum $\{0,1,...,n-1\}$. For every such $(X,d) \in \mathfrak{U}_n$ we write $G_X^d[X_1,X_2]$ for the diametrical graph of $(X,d)$. The inequality $n\geqslant 3$ implies that $\diam X= n-1>1$. Since
$$
\Sp{X}=\{n-1\}\cup \Sp{X_1} \cup \Sp{X_2}
$$
and
$$
\Sp{X_1}\cap \Sp{X_2} = \{0\},
$$
we may assume without loss of generality that
$$
1\in \Sp{X_1}\, \text{ and } \, 1\notin \Sp{X_2}.
$$
Let $|X_1|=k$. It follows from $1\in \Sp{X_1}$ that $k\geqslant 2$. Moreover the statement $X_2\neq \varnothing$ implies that $k\leqslant n-1$. As was noted in the second section of the paper we have
$$
X_1 \in \mathfrak{U}_k \, \text{ and } \, X_2 \in \mathfrak{U}_{n-k}.
$$
Let $\Sp{X_1}=\{0,1,n_1,...,n_{k-2}\}$ where $1<n_1<...<n_k$ (if $k\geqslant 3$). The set $\{n_1,...,n_{k-2}\}$ can be selected from the set $\{2,...,n-2\}$ in $C_{n-3}^{k-2}$ ways. It is clear that if $(X,d)$, $(Y,\rho)\in \mathfrak{U}_n$ and
$$
\Sp{X}=\Sp{Y}=\{0,1,...,n-1\}
$$
and if for the diametrical graphs $G_X^d[X_1,X_2]$, $G_Y^{\rho}[Y_1,Y_2]$ we have
$$
1\in \Sp{X_1} \, \text{ and } \, 1\in \Sp{Y_1},
$$
the $X$ and $Y$ are isometric if and only if $X_1$ is isometric to $Y_1$ and $X_2$ is isometric to $X_2$. Now using the multiplication principle and additional principle we obtain~(\ref{e3.2}).
\end{proof}

\begin{cor}
The number of all non-isometric spaces $X\in \mathfrak{U}_n$ with given $\Sp{X}$ equals to
$$
\sum_{k=2}^{n-1} C_{n-3}^{k-2} \kappa({\mathfrak{U}_k})\kappa({\mathfrak{U}_{n-k}}),
$$
where $\kappa({\mathfrak{U}_1}) = \kappa({\mathfrak{U}_2})=1$.
\end{cor}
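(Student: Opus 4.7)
The corollary is essentially a direct restatement of Proposition~\ref{p2}, so my plan is to verify that ``number of weak similarity classes in $\mathfrak{U}_n$'' coincides with ``number of isometry classes in $\mathfrak{U}_n$ with prescribed spectrum,'' and then quote the formula.

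First I would invoke the result from~\cite{DP(ActaMH)} already mentioned in this section: if $X$ and $Y$ are compact ultrametric spaces sharing the same spectrum, then every weak similarity $\Phi\colon X\to Y$ is in fact an isometry. Hence, restricted to the subclass of $\mathfrak{U}_n$ consisting of spaces with a common fixed spectrum $S\subset\mathbb{R}^+$, the relations ``isometric'' and ``weakly similar'' coincide.

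Next, I would observe that any given $S\subset\mathbb{R}^+$ with $0\in S$ and $|S|=n$ can serve as a reference spectrum: given an arbitrary $X\in\mathfrak{U}_n$, there is a unique strictly increasing bijection $f\colon S\to\Sp{X}$, and equation \eqref{eq1.4} shows that $X$ is weakly similar to a space with spectrum $S$. Consequently, every weak similarity class in $\mathfrak{U}_n$ contains at least one representative whose spectrum equals $S$, and by the previous paragraph, two such representatives lie in the same class if and only if they are isometric. This gives a bijection between $\mathfrak{U}_n/\!\simeq$ and the set of isometry classes of spaces in $\mathfrak{U}_n$ whose spectrum is exactly $S$. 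Therefore, the number of non-isometric spaces $X\in\mathfrak{U}_n$ with given $\Sp{X}$ equals $\kappa(\mathfrak{U}_n)$.

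Finally, I would apply Proposition~\ref{p2} to substitute the recursive formula
$$
\kappa(\mathfrak{U}_n) = \sum_{k=2}^{n-1} C_{n-3}^{k-2}\,\kappa(\mathfrak{U}_k)\,\kappa(\mathfrak{U}_{n-k})
$$
with initial values $\kappa(\mathfrak{U}_1)=\kappa(\mathfrak{U}_2)=1$, which yields precisely the asserted expression. There is no substantial obstacle here: the only nontrivial content is the identification of weak similarity with isometry on spaces of fixed spectrum, which is imported from~\cite{DP(ActaMH)}; everything else is bookkeeping.
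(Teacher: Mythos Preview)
Your proposal is correct and follows essentially the same approach as the paper: the paper gives no separate proof of the corollary, since the identification of $\kappa(\mathfrak{U}_n)$ with the number of non-isometric spaces of a fixed spectrum is already made in the discussion preceding Proposition~\ref{p2} (via the result from~\cite{DP(ActaMH)}) and in the first line of its proof, so the corollary is simply a restatement of~\eqref{e3.2}. Your write-up just makes that identification explicit.
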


Using formula \eqref{e3.2}  we can find $\kappa({\mathfrak{U}_3})=1$, $\kappa({\mathfrak{U}_4})=2$, $\kappa({\mathfrak{U}_5})=5$, $\kappa({\mathfrak{U}_6})=16$, $\kappa({\mathfrak{U}_7})=61$ and so on.

\begin{rem}
As was shown in \cite{PD(UMB)} there is an isomorphism between spaces from $\mathfrak U$ and strictly decreasing binary trees.

It is easy to see that there is also a bijection between the strictly decreasing binary trees and the ranked trees  $\mathcal R_n$. The definition of the ranked trees $\mathcal R_n$ one can find in~\cite{DW}. It was noted in~\cite{DW} that numbers of $\mathcal R_n$ correspond to sequence A000111 from~\cite{Sl}.
\end{rem}

\section{Ball-preserving mappings, $\varepsilon$-isometries and semimetric spaces}

Let $X$ be a set. A \emph{semimetric} on $X$ is a function $d\colon X\times X\to \mathbb{R}^+$ such that $d(x,y)=d(y,x)$ and $(d(x,y)=0)\Leftrightarrow (x=y)$ for all $x,y \in X$. A pair $(X,d)$, where  $d$  is a semimetric on $X$, is called a \emph{semimetric space} (see, for example, \cite{Blum(1953)}).

A \emph{directed graph} or \emph{digraph} is a set of nodes connected by edges, where the edges have a direction associated with them. In formal terms a digraph is a pair $G=(V,A)$ of
\begin{itemize}
\item  a set $V$, whose element  are called vertices or nodes,
\item a set $A$ of ordered pairs of vertices, called arcs, directed edges, or arrows.
\end{itemize}

An arc $e = \langle x, y \rangle$ is considered to be directed from $x$ to $y$; 
$y$ is said to be a \emph{direct successor} of $x$, and $x$ is said to be a \emph{direct predecessor} of $y$. If a path made up of one or more successive arcs leads from $x$ to $y$, then $y$ is said to be a \emph{successor} of $x$, and $x$ is said to be a \emph{predecessor} of $y$.

A Hasse diagram for a partially ordered set $(X,\leqslant_X)$ is a digraph $(X,A_X)$, where $X$ is the set of vertices and $A_X\subseteq X\times X$ is  the set of directed edges such that the pair $\langle v_1,v_2 \rangle$ belongs to $A_X$ if and only if $v_1\leqslant_X v_2$, $v_1\neq v_2$, and implication
$$
(v_1\leqslant_X w\leqslant_X v_2)\Rightarrow (v_1=w \vee v_2=w)
$$
holds for every $w\in X$.

Recall that a subset $B$ of a semimetric space $(X,d)$ is called a closed ball if it can be represented as follows:
$$
B=B_r(t)=\{x\in X\colon d(x,t)\leqslant r\},
$$
where $t\in X$ and $r\in [0,\infty)$. Denote by $\textbf{B}_X$ the set of all distinct balls of
semimetric space $(X,d)$.

\begin{defn}\label{def1}
Let $X$ and $Y$ be semimetric spaces. 
A mapping $F\colon X \to Y$ is ball-preserving if
\begin{equation}\label{eq1}
F(Z)\in \textbf{B}_Y,
\end{equation}
for every $Z \in \textbf{B}_X$.
\end{defn}

\begin{defn}\label{def4.1}
Let $G_1 = (V_1,A_1)$ and $G_2=(V_2,A_2)$ be directed graphs. A map $F\colon V_1 \to V_2$ is a \emph{graph homomorphism} if the implication
$$
(\langle u, v\rangle \in A_1) \Rightarrow (\langle F(u), F(v)\rangle \in A_2)
$$
holds for all $u,v \in V_1$. A homomorphism $F\colon V_1 \to V_2$ is an \emph{isomorphism} if $F$ bijective and the inverse map $F^{-1}$ is also a homomorphism.
\end{defn}

According to ~\cite{HN} we shall say that a graph homomorphism $F\colon V_1 \to V_2$ from $G_1 = (V_1,A_1)$ to $G_2=(V_2,A_2)$ is \textit{arc-surjective} if for every $\langle x, y\rangle \in A_2$ there is $\langle u, v\rangle \in A_1$ such that $\langle x, y\rangle = \langle F(u), F(v)\rangle$.

It is evident that every isomorphism is arc-surjective. \emph{Furthermore, if $G_1$ and $G_2$ have no isolated points, then every injective arc-surjective homomorphism $F\colon V_1 \to V_2$ is an isomorphism.} It was shown in \cite{P(TIAMM)} that if $X$ and $Y$ are finite ultrametric spaces, then the following conditions equivalent.
\begin{itemize}
\item There is a bijective ball-preserving mapping $F\colon X \to Y$ such that the inverse mapping $F^{-1}\colon Y \to X$ is also ball-preserving.
\item The Hasse diagrams $(\B_X,A_{\B_X})$ and $(\B_Y,A_{\B_Y})$ of the posets $(\B_X,\subseteq)$ and $(\B_Y,\subseteq)$ are isomorphic as directed graphs.
\end{itemize}

\begin{defn}\label{def4.2}
Let $(X,d)$ and $(Y, \rho)$ be semimetric spaces and let $\varepsilon>0$. A surjective mapping $F\colon X\to Y$ is an $\varepsilon$-isometry if the inequality
$$
|d(x,y) - \rho(F(x),F(y))| \leqslant \varepsilon
$$
holds for all $x, y \in X$.
\end{defn}

The main result of the present section is the following two theorems.

\begin{thm}\label{th4.2}
Let $X$ be a finite nonempty semimetric space. Then there is a finite ultrametric space $Y$ and a surjective ball-preserving function $F\colon Y\to X$ such that the mapping
$$
\B_Y \ni B \mapsto F(B) \in \B_X
$$
is an arc-surjective homomorphism from the Hasse diagram $(\B_Y,A_{Y})$ of $(\B_Y, \subseteq)$ to the Hasse diagram $(\B_X,A_{X})$ of $(\B_X, \subseteq)$.
\end{thm}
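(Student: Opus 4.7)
The plan is to construct $Y$ as the leaf set of a rooted tree $T$ that encodes the Hasse diagram $(\B_X,A_X)$, endow $Y$ with an ultrametric induced by weights on the internal nodes, and take $F\colon Y\to X$ to be the map sending each leaf to its label. I define a family of trees $T(B)$ for $B\in\B_X$ by recursion: $T(\{x\})$ is a single leaf labelled by $x$, and for $|B|\geqslant 2$ the root $v_B$ of $T(B)$ has as its children the roots of the trees $T(B_1),\ldots,T(B_k)$, where $B_1,\ldots,B_k$ are the maximal proper sub-balls of $B$ in $\B_X$ (i.e., those $B_i$ for which $(B_i,B)\in A_X$). I put $T:=T(X)$, let $Y$ be its set of leaves (distinct leaves being distinct elements of $Y$, even when they carry the same label), and define $F\colon Y\to X$ by sending each leaf to its label. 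If an element $x\in X$ lies in two distinct maximal proper sub-balls of a common $B$, then $x$ will appear as two different leaves of $T$, so $F$ is in general non-injective; this duplication is precisely what lets an ordered tree encode the (generally non-tree-like) inclusion poset $\B_X$.

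Next, I equip $Y$ with an ultrametric by assigning each internal node $v_B$ a positive weight $w(v_B)$ that strictly exceeds the weights of its internal descendants (for instance $w(v_B)=|B|$) and setting $d_Y(y_1,y_2):=w(\mathrm{lca}(y_1,y_2))$ for distinct $y_1,y_2\in Y$. Then $d_Y$ is an ultrametric, the balls of $(Y,d_Y)$ are exactly the singletons $\{y\}$ together with the leaf-sets $L_v$ of the internal nodes $v$ of $T$, and the cover relations of $(\B_Y,\subseteq)$ are precisely the parent--child pairs of $T$ (of type either leaf--internal or internal--internal).

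The verifications are then immediate. A short induction on $T$ gives $F(L_{v_B})=B$ for every internal $v_B$, which simultaneously yields surjectivity of $F$ (apply to $v_X$) and ball-preservation (every non-singleton ball of $Y$ is some $L_{v_B}$, and singletons map to singletons, which always lie in $\B_X$). For the homomorphism property, any arc of $A_Y$ has the form $(L_{v_{B'}},L_{v_B})$ with $v_{B'}$ an internal child of $v_B$, or $(\{y\},L_{v_B})$ with $y$ a leaf child of $v_B$; by the construction of $T$, in either case the relevant child of $v_B$ comes from a maximal proper sub-ball of $B$ in $\B_X$, so the image under $F$ is the arc $(B',B)\in A_X$ or $(\{F(y)\},B)\in A_X$. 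Conversely, for arc-surjectivity, every cover pair $(B',B)\in A_X$ is realised by the parent--child edge $(v_{B'},v_B)$ (or by a leaf--$v_B$ edge if $|B'|=1$) that occurs inside the subtree $T(B)$ by construction.

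The main conceptual obstacle is the non-tree-like nature of $(\B_X,\subseteq)$: a single ball may be covered by several incomparable larger balls, so no tree with leaves in bijection with $X$ can faithfully realise the inclusion poset. Point duplication resolves this, and the key technical point is to check that this duplication creates no spurious cover relations in $(\B_Y,\subseteq)$ beyond those induced by the parent--child edges of $T$, so that the induced map $\B_Y\to\B_X$ stays a genuine arc-surjective homomorphism of Hasse diagrams.
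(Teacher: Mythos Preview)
Your construction is essentially the paper's own proof: both build a rooted tree $T$ by unfolding the Hasse diagram of $(\B_X,\subseteq)$ top-down (with duplication whenever a ball is covered by several larger balls), take $Y$ to be the leaf set with an ultrametric whose representing tree is $T$, and let $F$ send each leaf to its label. The paper phrases the ultrametric step as ``let $Y$ be an ultrametric space with representing tree isomorphic to $T$'' and cites an external lemma for the ball structure, whereas you carry this out explicitly via strictly decreasing node weights and the $\mathrm{lca}$ formula; conversely, your ``short induction'' giving $F(L_{v_B})=B$ and the fact that every internal node has at least two children tacitly use the content of the paper's Lemma~\ref{lem4.1}, which you should cite rather than leave implicit.
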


\begin{thm}\label{th4.4}
Let $(Y,d)$ be a finite ultrametric space. Then for every $\varepsilon>0$ there is a bijective $\varepsilon$-isometry $\Phi\colon W \to Y$ such that $W \in \mathfrak U$.
\end{thm}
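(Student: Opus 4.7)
The plan is to take $W := Y$ as an underlying set and $\Phi = \mathrm{id}_Y$, reducing the problem to constructing a new ultrametric $\rho$ on $Y$ such that $(Y,\rho) \in \mathfrak U$ and $|d(x,y) - \rho(x,y)| \leqslant \varepsilon$ for all $x,y \in Y$. The key observation is that a finite ultrametric space lies in $\mathfrak U$ exactly when its representing tree is strictly binary \emph{and} all internal-node labels are pairwise distinct: a strictly binary tree on $|Y|$ leaves has exactly $|Y|-1$ internal nodes, whose labels together with $0$ make up the spectrum, so $|\mathrm{Sp}| = |Y|$ precisely under these two conditions. Thus it suffices to perturb the labeled tree $T_Y$ into one of this form while controlling the change in distances.

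For the first defect (some internal node $v$ of $T_Y$ has $k \geqslant 3$ children $v_1,\dots,v_k$) I would binarize: replace the local structure by a caterpillar of $k-1$ new binary internal nodes $w_1,\dots,w_{k-1}$, where $w_i$ has children $v_i$ and $w_{i+1}$ for $i<k-1$ and $w_{k-1}$ has children $v_{k-1},v_k$, and assign labels $l(w_i) := l(v) - (i-1)\delta$ for a small $\delta > 0$. Choose $\delta$ strictly smaller than half of the minimum positive gap $l(v_{\mathrm{par}}) - l(v_{\mathrm{chi}})$ taken over all internal parent--child pairs of $T_Y$. Then the resulting tree $T''$ is still a valid representing tree (parent--child labels remain strictly decreasing, as required by Lemma~\ref{lem1}) of a new ultrametric $\tilde d$ on $Y$, and every distance changes by at most $(|Y|-2)\delta$.

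For the second defect (distinct internal nodes of $T''$ sharing a label) I would traverse the internal nodes of $T''$ in root-first order and replace each label $l(u)$ by a value in the open interval $(l(u) - \mu, l(u) + \mu)$ that avoids the finitely many previously chosen labels and lies strictly below the newly assigned label of the parent of $u$; such a value exists because one avoids only finitely many points inside a nondegenerate open interval. Taking $\mu > 0$ smaller than both $\varepsilon/2$ and half the minimum label gap in $T''$ forces strict decrease along every ancestor--descendant pair (so the resulting tree represents an ultrametric $(Y,\rho) \in \mathfrak U$) and contributes at most $\mu$ extra displacement to any distance. Fixing $\delta$ additionally small so that $(|Y|-2)\delta + \mu < \varepsilon$ then yields $|d(x,y) - \rho(x,y)| \leqslant (|Y|-2)\delta + \mu < \varepsilon$ for all $x,y \in Y$.

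The main obstacle is the bookkeeping that links the two perturbation rounds: both must simultaneously preserve the strict-decrease condition along every root-to-leaf path (which is precisely what makes the tree a valid representing tree of an ultrametric space) and keep the cumulative change in any single distance below $\varepsilon$. This is handled by fixing $\delta$ and $\mu$ at the outset in terms of $\varepsilon$ and the minimum positive label gap of $T_Y$; the identity $\Phi \colon (Y,\rho) \to (Y,d)$ is then the required bijective $\varepsilon$-isometry with $(Y,\rho) \in \mathfrak U$.
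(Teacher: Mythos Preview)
Your argument is essentially correct and follows a genuinely different route from the paper. The paper invokes an external result (Lemma~\ref{lem4.9}, taken from~\cite{PD(UMB)}) asserting that every finite ultrametric space is Gromov--Hausdorff close to a member of $\mathfrak U$ of the same cardinality, and then extracts a bijective $\varepsilon$-isometry from the Gromov--Hausdorff embedding by a pigeonhole argument. You instead work directly and constructively on the representing tree $T_Y$: first binarize non-binary internal nodes via caterpillars with slightly decreasing labels, then nudge the internal labels to make them pairwise distinct. Your approach is more self-contained (no black-box lemma) and makes the perturbation explicit on the same underlying set, while the paper's approach is cleaner once the Gromov--Hausdorff lemma is granted and illustrates how $d_{GH}$-closeness with matching cardinalities yields $\varepsilon$-isometries.

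One quantitative point needs tightening. When you binarize a node $v$ with $k\geqslant 3$ children into a caterpillar $w_1,\dots,w_{k-1}$ with $l(w_i)=l(v)-(i-1)\delta$, the bottom label drops by $(k-2)\delta$, and you must keep it strictly above the labels of the internal children of $v$. Your stated bound ``$\delta$ less than half the minimum positive gap'' only guarantees this when $k=3$; for larger $k$ you need $\delta$ smaller than the minimum gap divided by (say) $|Y|$. Since you later shrink $\delta$ further anyway to force $(|Y|-2)\delta+\mu<\varepsilon$, this is a trivial fix, but the sentence as written is not quite right. You should also note in passing that the perturbations keep all internal labels strictly positive, which follows once $\delta$ and $\mu$ are taken small relative to the minimum nonzero value in $\Sp{Y}$.
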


Theorems \ref{th4.2} and \ref{th4.4} imply the following
\begin{cor}\label{cor4.5}
For every finite nonempty semimetric space $X$ and every $\varepsilon>0$ there are mappings $F\colon Y\to X$ and $\Phi\colon Z \to Y$ such that $Y$ is finite and ultrametric, $Z \in \mathfrak U$, $F$ is ball-preserving, $\Phi$ is an $\varepsilon$-isometry and
$$
X = F(\Phi(Z)).
$$
\end{cor}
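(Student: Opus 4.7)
The plan is to obtain Corollary~\ref{cor4.5} as a direct composition of the two preceding theorems; no new machinery is required, and the only thing one must check is that the compositions line up in the right order with the right surjectivity properties.

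First I would invoke Theorem~\ref{th4.2} applied to the given finite nonempty semimetric space $X$. This produces a finite ultrametric space $Y$ together with a surjective ball-preserving map $F\colon Y\to X$. At this stage we already have $F(Y)=X$ and the required ball-preserving property of $F$.

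Next I would apply Theorem~\ref{th4.4} to the ultrametric space $Y$ just produced, with the prescribed $\varepsilon>0$. This yields a space $Z\in\mathfrak U$ and a bijective $\varepsilon$-isometry $\Phi\colon Z\to Y$. Since $\Phi$ is a bijection onto $Y$, we have $\Phi(Z)=Y$, and combining this with the surjectivity of $F$ established in the previous step gives
\[
F(\Phi(Z))=F(Y)=X,
\]
which is the equality demanded by the corollary.

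There is essentially no obstacle: all properties required of $F$, $\Phi$, $Y$ and $Z$ are guaranteed verbatim by Theorems~\ref{th4.2} and~\ref{th4.4}, and the only nontrivial bookkeeping is noting that $\Phi(Z)=Y$ because $\Phi$ is bijective, so that the image of $F\circ\Phi$ is exactly the image of $F$. The work of the section is entirely in the two theorems themselves; the corollary is purely a statement that they can be composed.
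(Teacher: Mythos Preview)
Your proposal is correct and matches the paper's approach exactly: the paper simply declares that Corollary~\ref{cor4.5} is implied by Theorems~\ref{th4.2} and~\ref{th4.4}, and your argument spells out precisely that composition, using the surjectivity of $F$ and the bijectivity of $\Phi$ to obtain $F(\Phi(Z))=X$.
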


The next lemma will be used in the proof of Theorem~\ref{th4.2}.

\begin{lem}\label{lem4.1}
Let $X$ be a finite semimetric space. If $B \in \B_X$ and $|B|\geqslant 2$, then the following statements hold.
\begin{itemize}
\item[(i)] The ball $B$ has at least two direct predecessors in the Hasse diagram $(\B_X, A_{\B_X})$.
\item[(ii)] The union of all direct predecessors of $B$ coincides with $B$.
\end{itemize}
\end{lem}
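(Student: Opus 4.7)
\medskip

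The plan is to exploit two features of the setting: singletons are balls, and the poset $\B_X$ is finite.

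First, I would observe that for every $x\in X$ the singleton $\{x\}$ is a closed ball, namely $B_0(x)$, because the semimetric axiom $(d(x,y)=0)\Leftrightarrow(x=y)$ forces $B_0(x)=\{x\}$. Consequently, for every point $x$ of a ball $B$ with $|B|\geqslant 2$, the collection
\[
\mathcal{S}_x=\{B'\in \B_X\colon x\in B'\subsetneq B\}
\]
is nonempty (it contains $\{x\}$, which is strictly smaller than $B$ since $|B|\geqslant 2$). Since $\B_X$ is finite, $\mathcal{S}_x$ has a maximal element $B'_x$ under $\subseteq$. By maximality, no ball $B''$ satisfies $B'_x\subsetneq B''\subsetneq B$; hence $B'_x$ is a direct predecessor of $B$ in the Hasse diagram $(\B_X,A_{\B_X})$ that contains $x$.

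This immediately gives statement (ii): the union $U$ of all direct predecessors of $B$ contains every $x\in B$, and each such predecessor is a subset of $B$, so $U=B$.

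For statement (i), I would argue by contradiction. By the previous paragraph $B$ has at least one direct predecessor, say $B_1$. If $B_1$ were the only one, then by (ii) we would have $B=B_1$, contradicting the fact that direct predecessors are \emph{proper} sub-balls (and $|B|\geqslant 2$ ensures $B$ itself is not a singleton ruling out degenerate issues). Equivalently, since $B_1\subsetneq B$, one can pick $y\in B\setminus B_1$; applying the first step to this $y$ yields a direct predecessor $B_2$ of $B$ with $y\in B_2$, so $B_2\neq B_1$.

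There is no real obstacle here; the only point that requires care is the small verification that $\{x\}=B_0(x)$ is a closed ball in the sense of the paper, and that the poset-theoretic notion of ``direct predecessor of $B$'' coincides with ``maximal ball strictly contained in $B$'', both of which follow directly from the definitions given in the excerpt.
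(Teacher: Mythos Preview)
Your argument is correct and essentially identical to the paper's: both proofs use that singletons $\{x\}=B_0(x)$ are balls, invoke finiteness to extract maximal proper sub-balls (i.e., direct predecessors) through each point, and then derive (i) from (ii) by noting that a unique predecessor would have to equal $B$. The only cosmetic difference is that you organize the maximal-element argument pointwise via $\mathcal{S}_x$, whereas the paper works with the single set $\mathbf{S}=\{S\in\B_X:S\subsetneq B\}$ and then observes each $x\in B$ lies below some maximal element of $\mathbf{S}$.
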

\begin{proof}
Let $B \in \mathbf{B}_X$ and $|B|\geqslant 2$. The set of all direct predecessors of $B$ is simply the set of all maximal elements of the subset
\begin{equation}\label{e4.1}
\mathbf{S} = \{S \in \B_X\colon S \subseteq B \text{ and } S \neq B\}
\end{equation}
of the poset $(\B_X, \subseteq)$. The inequality $|B|\geqslant 2$ implies
$$
B \subseteq \bigcup S, \quad S \in \mathbf{S},
$$
because $\{x\} \in \mathbf S$ for every $x \in B$. Since $X$ is finite, $\mathbf{S}$ is also finite and consequently for every $x \in B$ there is a maximal element $S$ of $\mathbf{S}$ such that $x \in S$. Statement (ii) follows. Now to finish the proof it suffices to note that if $B$ contains a unique direct predecessor $S$, then $B = S$ contrary to \eqref{e4.1}.
\end{proof}

\begin{proof}[Proof of Theorem \ref{th4.2}]
Let $(\B_X,A_{\B_X})$ be a Hasse diagram of the poset $(\B_X,\subseteq)$.  To this diagram we assign $n$-ary rooted labeled tree $T$ by the following procedure.
Let the root $v_0$ of $T$ be labeled by $X$. Let $B_1,...,B_k$ be direct predecessors of $X$ in $(\B_X,A_{\B_X})$. Define $v_1,..,v_k$ to be the children (nodes of the first level) of $v_0$ with the labels $B_1,...,B_k$ respectively. Let us look at the nodes of the first level of the tree $T$. Define the children of the nodes $v_i$, $i=1,...,k$,  as follows: if there is no $Y$ such that $\langle Y, B_i\rangle \in A_{\B_X}$ then $v_i$ is a leaf of $T$; if $B_{i1}, B_{i2},...,B_{in}$ are direct predecessors of $B_i$ in $(\B_X,A_{\B_X})$, then define $v_{i1}, v_{i2},...,v_{in}$ to be the children of $v_i$ (nodes of the second level) with labels $B_{i1}, B_{i2},...,B_{in}$ respectively. Note that the nodes of the second level may have the identical labels in the case when $B_{ij}$ is a direct predecessor both $B_{k_1}$ and $B_{k_2}$. Do the same procedure with the nodes of the second level and so on. By
Lemma~\ref{lem4.1} $T$ is $n$-ary tree with $n\geqslant 2$. Note also that the leaves of $T$ are labeled with the balls $\{x_i\}$, $x_i\in X$.

Let $n$ be the number of leaves of $T$. We define a new names $y_i$, $i=1,..,n$, for the leaves of $T$ in any order but save the labels of these leaves. Let $Y$ be an ultrametric space with representing tree isomorphic to $T$, $Y=\{y_1,...,y_n\}$. Define $F\colon Y\to X$ by the rule
$$
F(y_i)=x_i\, \text{ if the label of } \,  y_i \, \text{ is } \, x_i.
$$
We claim that $F$ is ball-preserving. Indeed, by Lemma 4 in~\cite{P(TIAMM)} for every $B\in \mathbf{B}_Y$ there exists a node $\tilde{v}$ of $T$ such that $\Gamma_{T}(\tilde{v})=B$, where $\Gamma_T(\tilde{v})$ is the set of all leaves of subtree with the root $\tilde{v}$. And let $\tilde{B}$ be the label of $\tilde{v}$. According to Lemma~\ref{lem4.1} and the construction of $T$ the set $F(B)$ coincides with  $\tilde{B}$. It suffice to note that $\tilde{B}$ is a ball in $\textbf{B}_X$ because all the nodes in $T$ are labeled by balls of semimetric space $X$. Furthermore, it is easily seen that the mapping
$$
\mathbf{B}_X \ni B \mapsto F(B) \in \mathbf{B}_Y
$$
is an arc-surjective homomorphism from $(\mathbf{B}_Y, A_Y)$ to $(\mathbf{B}_X, A_X)$ as required.
\end{proof}

\begin{defn}\label{def4.8}
Let $(Y, d_Y)$ and $(W, d_W)$ be bounded metric spaces and let $\Delta>0$. The Gromov--Hausdorff distance $d_{GH}(Y,W)$ is less than $\Delta$ if there exists a metric spaces $(Z, d_Z)$ with subspaces $Y'$ and $W'$ such that
\begin{itemize}
\item $Y$ and $Y'$ are isometric;
\item $W$ and $W'$ are isometric;
\item We have the inclusions
\begin{equation}\label{e4.3}
Y' \subseteq \bigcup_{w \in W'} O_\Delta (w) \text{ and } W' \subseteq \bigcup_{y \in Y'} O_\Delta(y)
\end{equation}
where for $t \in Z$, $O_\Delta(t) = \{z \in Z\colon d_Z(t,z)<\Delta\}$ is an open ball from $(Z, d_Z)$ that has the radius $\Delta$.
\end{itemize}
\end{defn}

The next lemma is a reformulation Proposition~4.1 from \cite{PD(UMB)}.

\begin{lem}\label{lem4.9}
Let $Y$ be a finite ultrametric space and let $\varepsilon>0$. Then there is a finite ultrametric space $W \in \mathfrak U$ such that $|Y| = |W|$ and
$$
d_{GH}(Y, W) < \varepsilon.
$$
\end{lem}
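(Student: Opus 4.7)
The plan is to construct $W$ as a small perturbation of the representing tree $T_Y$ of $Y$, producing a strictly binary tree $T_W$ with pairwise distinct internal labels. Such a tree represents an ultrametric space in $\mathfrak U$: it has $\abs{W}$ leaves and exactly $\abs{W}-1$ internal nodes, contributing $\abs{W}-1$ distinct positive spectrum values, which together with $0$ give $\abs{\Sp{W}}=\abs{W}$.

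I would proceed in two steps, keeping the leaf set of $T_Y$ (that is, the elements of $Y$) unchanged. First, binarize $T_Y$ wherever necessary: at any internal node $v$ with $k \geqslant 3$ children $v_1, \ldots, v_k$, replace this local configuration by a binary cascade, introducing new internal nodes $u_2, \ldots, u_{k-1}$ with labels chosen strictly inside $(\max_i l(v_i), l(v))$, and pair up the children so that every new internal node has exactly two children. The strict descending condition on ancestor/descendant labels is preserved by construction, so we still have a valid representing tree; a leaf-to-leaf distance $d_Y(x,y)$ changes only when the nearest common ancestor of $x, y$ moves from $v$ to some $u_j$, and the change is bounded by $l(v) - l(u_j)$, which can be made as small as desired by choosing $l(u_j)$ within $\eta$ of $l(v)$. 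Second, if any internal labels still coincide, perturb them individually by amounts smaller than both $\eta$ and every parent--child gap, restoring pairwise distinctness while preserving the descending condition. Taking $\eta$ sufficiently small delivers an ultrametric $d_W$ on $W:=Y$ satisfying
\begin{equation*}
\sup_{x,y \in Y} \abs{d_Y(x,y) - d_W(x,y)} < \varepsilon.
\end{equation*}

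To estimate $d_{GH}(Y, W)$, I would glue the two spaces via the identity bijection into an ambient metric space $Z := Y \sqcup W$: set $d_Z = d_Y$ on $Y$, $d_Z = d_W$ on $W$, and use a crossing cost strictly less than $\varepsilon$ (but at least $\tfrac{1}{2}\sup\abs{d_Y-d_W}$) between each point and its counterpart, extending elsewhere by the shortest-path rule. The lower bound on the crossing cost forces $Y$ and $W$ to embed isometrically in $Z$, while the upper bound guarantees that the copies lie in open $\varepsilon$-neighborhoods of each other. By Definition~\ref{def4.8} this yields $d_{GH}(Y, W) < \varepsilon$.

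The main obstacle is the binarization bookkeeping: one must simultaneously preserve the strict ancestor/descendant label inequality, keep every new label within a controlled window of $l(v)$ so that leaf-to-leaf distances shift by less than $\varepsilon$, and ensure that after the further perturbation no two internal labels coincide. Once this is set up correctly, the Gromov--Hausdorff estimate reduces to the standard fact that two metrics on the same finite carrier set whose values differ pointwise by less than $\delta$ have Gromov--Hausdorff distance less than $\delta$.
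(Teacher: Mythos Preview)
Your argument is sound. The paper itself does not prove Lemma~\ref{lem4.9}; it simply records it as ``a reformulation of Proposition~4.1 from~\cite{PD(UMB)}'' and cites that earlier work. So there is no in-paper proof to compare against, and what you have written is a self-contained substitute.

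Your two-step perturbation (binarize, then separate coinciding labels) is exactly the natural construction, and the identification of $\mathfrak U$ with strictly binary representing trees carrying pairwise distinct internal labels is what the cited paper establishes (it is also alluded to in the remark at the end of Section~\ref{numofwe}). One small point of bookkeeping: when you write ``labels chosen strictly inside $(\max_i l(v_i),\, l(v))$'', some of the $v_i$ may be leaves, whose labels are elements of $X$ rather than real numbers; you should read this maximum as taken over the internal children only (equivalently, treat leaf children as contributing $0$). Since every internal child satisfies $l(v_i)<l(v)$, the interval is nonempty and you can place the new labels within $\eta$ of $l(v)$ as claimed. The Gromov--Hausdorff step via the coupling metric on $Y\sqcup W$ with crossing cost $c\in[\tfrac12\sup|d_Y-d_W|,\varepsilon)$ is standard and correct; the lower bound on $c$ is precisely what makes the shortest-path extension restrict to $d_Y$ and $d_W$.
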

Now we are ready to prove Theorem~\ref{th4.4}.

\begin{proof}[Proof of Theorem~\ref{th4.4}]
The theorem is trivial if $|Y| \leqslant 2$. Let $|Y| \geqslant 3$, let $\varepsilon>0$ and let
$$
\delta = \min\{d_Y(x,y)\colon x, y \in Y, x\neq y\}.
$$
Since $3 \leqslant |Y| < \infty$, we have $0<\delta<\infty$. By Lemma~\ref{lem4.9} for every $\Delta$ from the interval $(0, \min(\frac{\delta}{2},\frac{\varepsilon}{2}))$ there is $W \in \mathfrak U$ such that $d_{GH}(Y, W)< \Delta$. Let $(Z, d_Z)$ be metric space with contains isometric copies $Y'$ and $W'$ of $Y$ and $W$ respectively such that inclusions \eqref{e4.3} hold. We claim that for every $w \in W'$ there is a unique $y \in Y'$ such that $y \in O_\Delta(w)$. Suppose we can find $w \in W$ and two distinct $y_1, y_2 \in Y'$ which satisfy
$$
y_1 \in O_\Delta(w) \text{ and } y_2 \in O_\Delta(w).
$$
Then the triangle inequality and the definitions of $\delta$ and $\Delta$ imply
$$
\delta \leqslant d_Z(y_1,y_2) \leqslant d_Z(y_1,w) + d_Z(w,y_2) \leqslant 2 \Delta < \delta.
$$
This contradiction shows that, for every $w \in W'$, the set
$$
O_\Delta(w) \cap Y'
$$
is either empty or contains a single point. Consequently, if there exists $w^* \in W'$ such that
$$
O_\Delta(w^*) \cap Y' =\varnothing,
$$
then from the first inclusion in \eqref{e4.3} it follows that
$$
|Y'| = \left|\bigcup_{w \in W'} O_\Delta(w)\cap Y'\right| = \sum_{\substack{w \in W' \\ w\neq w^*}} |O_\Delta(w) \cap Y'| \leqslant |W'|-1,
$$
contrary to $|Y'|=|Y|=|W|=|W'|$.

Let $\varphi\colon W\to W'$ and $\psi\colon Y \to Y'$ be isometries. We define a function $\Phi\colon W \to Y$ by setting
\begin{equation}\label{e4.4}
(\Phi(w)=y) \Leftrightarrow (\psi(y) \in O_\Delta(\varphi(w)))
\end{equation}
for all $w \in W$ and $y \in Y$. The first part of the proof shows that this definition is correct and $\Phi$ is bijective. It remains to prove that $\Phi$ is an $\varepsilon$-isometry. For this purpose note that if $w_1, w_2 \in W$ and $y_1 = \Phi(w_1)$, $y_2=\Phi(w_2)$ then
$$
d_W (w_1,w_2) = d_Z(\varphi(w_1), \varphi(w_2)),
$$
$$
d_Y (\Phi(w_1), \Phi(w_2)) = d_Z(\psi(\Phi(w_1)), \psi(\Phi(w_2)))
$$
and, by \eqref{e4.4},
$$
d_Z(\varphi(w_i), \psi(\Phi(w_i))) < \Delta
$$
for $i=1,2$.  Now using the triangle inequality and the inequality $\Delta < \frac{\varepsilon}{2}$ we obtain
\begin{multline*}
|d_W(w_1,w_2) - d_Y(\Phi(w_1), \Phi(w_2))| \\
= |d_Z(\varphi(w_1), \varphi(w_2)) - d_Z(\psi(\Phi(w_1)), \psi(\Phi(w_2)))| \\
\leqslant d_Z(\varphi(w_1), \psi(\Phi(w_1))) + d_Z(\varphi(w_2), \psi(\Phi(w_2))) < \varepsilon.
\end{multline*}
Thus $\Phi$ is an $\varepsilon$-isometry as required.
\end{proof}

The class $\mathfrak U$ consisting of finite ultrametric spaces which are extremal for the Gomory-Hu inequality can be extended by the following way. If $X$ is a compact ultrametric space, then we define $X \in \mathfrak U_C$ if $Y \in \mathfrak U$ for every finite $Y \subseteq X$. It was shown in \cite{PD(UMB)} that $Y \in \mathfrak U$ if $Y \subset X$ and $X \in \mathfrak U$. Hence the class $\mathfrak U$ is a subclass of $\mathfrak U_C$. The following conjecture seems to be a natural generalization of theorems \ref{th4.2} and \ref{th4.4}.
\begin{conjecture}\label{con4.1}
Let $X$ be a compact nonempty semimetric space and let $\varepsilon>0$. Then there are continuous mappings $F\colon Y \to X$ and $\Phi\colon W \to Y$ such that $Y$ is compact ultrametric, $W \in \mathfrak U_C$, $\Phi$ is an $\varepsilon$-isometry and $F$ is ball-preserving and
$$
\mathbf{B}_Y \ni B \mapsto F(B) \in \mathbf{B}_X
$$
is an arc-surjective homomorphism from $(\mathbf{B}_Y, A_Y)$ to $(\mathbf{B}_X, A_X)$.
\end{conjecture}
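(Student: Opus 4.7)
My plan is to prove Conjecture~\ref{con4.1} by an approximation argument: write the compact semimetric $X$ as a direct limit of finite nets, apply the finite Theorems~\ref{th4.2} and~\ref{th4.4} level by level, and pass to an inverse limit. The key structural observation is that the construction in the proof of Theorem~\ref{th4.2} is really a construction on the Hasse diagram of $\mathbf{B}_X$, and this behaves functorially under refinement of finite nets; continuity of the limit map $F$ should then come for free from the ultrametric nature of inverse limits of finite ultrametric spaces.

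For the ball-preserving map $F\colon Y\to X$, I would fix an increasing sequence $(X_n)$ of finite $(1/n)$-nets with $X_n\subseteq X_{n+1}$ and $\bigcup_n X_n$ dense in $X$. Apply Theorem~\ref{th4.2} to obtain, for each $n$, a finite ultrametric space $Y_n$ and a ball-preserving surjection $F_n\colon Y_n\to X_n$ together with an arc-surjective homomorphism on Hasse diagrams. The refinement $X_n\subseteq X_{n+1}$ induces a natural map $\mathbf{B}_{X_{n+1}}\to\mathbf{B}_{X_n}$ (send a ball to the smallest element of $\mathbf{B}_{X_n}$ containing it), which lifts, through the tree-of-balls construction, to compatible projections $\pi_{n+1,n}\colon Y_{n+1}\to Y_n$. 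Set $Y:=\varprojlim(Y_n,\pi_{n+1,n})$; suitably normalising the ultrametrics $d_{Y_n}$ (for instance by rescaling diameters to match $\diam X_n$) makes $Y$ a compact ultrametric space, and the inverse-limit map $F$ of the $F_n$ is a continuous surjection onto $\overline{\bigcup_n X_n}=X$ which I expect to be ball-preserving with the claimed arc-surjective Hasse-diagram property.

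For the $\varepsilon$-isometry $\Phi\colon W\to Y$ with $W\in\mfu_C$, I would first upgrade Lemma~\ref{lem4.9} to the compact setting. Given a finite $(\varepsilon/3)$-net $Y'\subseteq Y$, apply Theorem~\ref{th4.4} to get a finite $W'\in\mfu$ and a bijective $(\varepsilon/3)$-isometry $\Phi'\colon W'\to Y'$. Then extend $\Phi'$ by \emph{blowing up} each $w\in W'$ into a scaled-down ultrametric copy of the ball $\{y\in Y:d_Y(y,\Phi'(w))<\varepsilon/3\}$, gluing these copies together along the representing tree of $W'$. Inside each blown-up piece I would apply generic perturbations so that the resulting space has all finite subspaces with pairwise distinct positive nonzero distances, hence in $\mfu$ by Theorem~\ref{th1}; by construction this places $W$ in $\mfu_C$. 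Compactness of $W$ follows because the blow-up of a finite tree by compact ultrametric fibres is compact, and checking that $\Phi$ is a bijective $\varepsilon$-isometry is a direct triangle-inequality computation using $\Delta<\varepsilon/2$ as in the proof of Theorem~\ref{th4.4}.

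The main obstacle, I expect, is preserving the exact ball structure in the inverse limit: ball-preservation is a sharp combinatorial condition, whereas inverse-limit constructions of ultrametrics are normally only tight up to Gromov--Hausdorff convergence. Ensuring that $F$ is genuinely ball-preserving, and that the induced map $\mathbf{B}_Y\to\mathbf{B}_X$ is arc-surjective onto \emph{all} of $\mathbf{B}_X$ (not merely the nested system of finite restrictions $\mathbf{B}_{X_n}$), seems to require passing to a subsequence of nets along which the tree of balls stabilises, together with a careful choice of $X_n$ compatible with the Hausdorff topology on $\mathbf{B}_X$. A secondary difficulty is verifying $W\in\mfu_C$: every finite subspace of $W$ must satisfy $|\Sp{\cdot}|=|\cdot|$, which constrains the blow-up perturbations globally and may well require an inductive argument along the levels of the representing tree of $W$ rather than a single-step perturbation.
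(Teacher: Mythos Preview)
The statement you are trying to prove is labelled \emph{Conjecture}~\ref{con4.1} in the paper, and the paper does \emph{not} supply a proof for it. It is presented as an open problem, motivated as ``a natural generalization'' of the finite Theorems~\ref{th4.2} and~\ref{th4.4}, and is followed only by the remark about continuous images of the Cantor set. There is therefore no proof in the paper to compare your proposal against.

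As for the proposal itself, it is an outline rather than a proof, and you already flag the two genuine obstructions. First, in a general \emph{semimetric} space $X$ (no triangle inequality) balls need not be nested, so ``send a ball of $X_{n+1}$ to the smallest ball of $X_n$ containing it'' may be ill-defined, the Hasse diagrams $(\mathbf{B}_{X_n},A_{X_n})$ need not converge in any useful sense to $(\mathbf{B}_X,A_X)$, and there is no reason the inverse-limit map should hit every ball of $X$. Second, the claim that a generic perturbation of the blown-up space $W$ gives pairwise distinct nonzero distances on \emph{every} finite subset is not a local condition and does not follow from Theorem~\ref{th1}; ensuring $W\in\mathfrak{U}_C$ is the substantive content here and your sketch does not address it. Both points would need new ideas beyond what the paper provides.
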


This statement can be considered as a variation of the following ``universal'' property of the Cantor set: ``Any compact metric space is a continuous image of the Cantor set.''

\section*{Acknowledgements}
The research of the first author was supported by a grant received from TUBITAK within 2221-Fellowship Programme for Visiting Scientists and Scientists on Sabbatical Leave. The research of the second author was supported as a part of EUMLS project with grant agreement PIRSES -- GA -- 2011 -- 295164.


\end{document}